\title{A remark about the spectral radius}
\author{Andreas Thom}
\address{A.T., Mathematisches Institut, U Leipzig,
PF 100920, 04009 Leipzig, Germany}
\email{andreas.thom@math.uni-leipzig.de}
\date{\today}
\newtheorem{thm}{Theorem}
\newtheorem{cor}[thm]{Corollary}
\newtheorem{lem}[thm]{Lemma}
\theoremstyle{definition}
\newtheorem{rem}[thm]{Remark}
\newcommand{\C}{{\mathbb C}}
\newcommand{\N}{{\mathbb N}}
\newcommand{\R}{{\mathbb R}}
\begin{document}

\onehalfspace

\maketitle
\begin{abstract} We show that for any finitely generated non-amenable group and any $\varepsilon>0$, there exists some finite symmetric generating set with spectral radius less than $\varepsilon$. We give applications to percolation theory and the theory of operator spaces.
\end{abstract}

\section{The spectral radius}

Let $\Gamma$ be a finitely generated group and let $S \subset \Gamma$ be a finite symmetric generating set. Here, a set is called symmetric if $S^{-1}=S$, i.e.\ $g \in S$ implies $g^{-1} \in S$. Consider the Hilbert space $\ell^2 \Gamma$ with orthonormal basis $\{\delta_g \mid g \in \Gamma \}$ and the left-regular representation $\lambda \colon \Gamma \to {\mathcal U}(\ell^2 \Gamma)$, which is defined by the formula $\lambda(g)\delta_h :=\delta_{gh}$. We will also use $\lambda$ to denote the linear extension $\lambda \colon \C[\Gamma] \to B(\ell^2 \Gamma)$. Here $\C[\Gamma]$ denotes the complex group ring and $\lambda$ is a $*$-homomorphism with respect to the natural involution on $\C[\Gamma]$. There is a natural trace $\tau \colon \C[\Gamma] \to \C$, defined by $\tau(\sum_g a_g g)=a_e$.

For each $a = \sum_g a_g g\in \C[\Gamma]$, we denote by $\|a\|$ the operator norm of the operator $\lambda(a) \in B(\ell^2 \Gamma)$. We also set
${\rm supp}(a) := \{g \in \Gamma \mid a_g \neq 0\}$, ${\rm size}(a) := |{\rm supp}(a)|$ and $\|a\|_1 := \sum_g |a_g|$. It is a basic property of the operator norm that $\|a^k\|=\|a\|^k$ whenever $a$ is a hermitean element, i.e.\ $a^*=a$. For $a,b \in \R[\Gamma]$, we write $a \leq_{\Gamma} b$ if $a_g \leq b_g$ for all $g \in \Gamma$. If $0 \leq_{\Gamma}b \leq_{\Gamma} a$ and $a,b \in \R[\Gamma]$ are hermitean, then $\|b\| \leq \|a\|$. Indeed, this follows from the spectral radius formula
\begin{equation} \label{eqsp}
\|b\| = \lim_{n \to \infty} \tau(b^{2n})^{1/2n} \leq \lim_{n \to \infty}\tau(a^{2n})^{1/2n} = \|a\|.
\end{equation}

Clearly, if $0 \leq_\Gamma a$, then $\|a^k\|_1 = \|a\|_1^k$.
For any symmetric subset $S \subset \Gamma$, we define the Markov operator $m(S) := \frac{1}{|S|} \sum_{s \in S} s$.
Kesten \cite{MR0112053} showed that the group $\Gamma$ is non-amenable if and only $\|m(S)\|< 1$ for some (and hence any) finite symmetric generating set of $\Gamma$. We will also use the notation $\rho(S) := \|m(S)\|$ and call it the spectral radius of the random walk associated with $S$. The spectral radius formula
$$\rho(S) = \lim_{n \to \infty} \tau(m(S)^{2n})^{1/2n}$$
gives the explanation for this terminology, since the right side of this equation is the exponential growth rate of the return probability of a random walk in the Cayley graph of $\Gamma$ with respect to the generating set $S$ after $2n$ steps, starting at the neutral element, see \cite{woess} for definitions and references.

Kesten's result gives a powerful criterion for amenability as well as non-amenability and has been used in many circumstances. 
It has been asked over the years if for any finitely generated non-amenable group and any $\varepsilon>0$ a finite symmetric generating set $S$ can be found such that $\|m(S)\| \leq \varepsilon$. (To trace back the precise history of this question is very difficult since it is so natural. For example, it has been asked by Gilles Pisier in his study of versions of the von Neumann problem, see Section \ref{opsp}.) Even though desirable to study and natural to ask, this question has remained unanswered.
Anyhow, the pressure to answer this question has been low, since in essentially all proofs and applications replacing $m(S)$ by $m(S)^k$ (and thus obtaining operator norm $\|m(S)^k\| = \|m(S)\|^k$) has proved to be sufficient for the purposes of the argument. In more combinatorial situations this amounts to a replacement of $S$ by the multi-set $S^{[k]}$ of all words of length $k$ in letters from $S$. In this note, we will show that the original question has a positive answer. Partial results about the same problem have been obtained by Juschenko-Nagnibeda \cite{juna}.

\vspace{0.2cm}

  For any subset $S \subset \Gamma$, we denote by $S^k$ the set of all products of $k$ factors from the set $S$.

\section{The main result}

Our main result is:

\begin{thm} \label{mainthm}
Let $\Gamma$ be a finitely generated non-amenable group and $\Sigma$ be a finite symmetric generating set. For every $k \in \N$ there exists a symmetric set $S_k \subset \Sigma^k$ such that
$$\rho(S_k) \leq 4k \cdot \ln(|\Sigma|) \cdot \rho(\Sigma)^k.$$

Moreover, for every $\varepsilon>0$, there exists a finite symmetric generating set $S \subset \Gamma$ such that $\rho(S) < \varepsilon$.
\end{thm}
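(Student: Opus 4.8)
The plan is to realise the element $m(\Sigma)^k$, whose operator norm is exactly $\rho(\Sigma)^k$, as a uniform average over a suitable level set of its own coefficient vector, at the cost of only a logarithmic factor; the second assertion will then follow by letting $k\to\infty$. First I would record what I need about $\mu:=m(\Sigma)^k=\sum_g c_g\,g\in\R[\Gamma]$: since $\Sigma$ is symmetric, $m(\Sigma)$ is hermitean with $0\leq_{\Gamma}m(\Sigma)$, so $\mu$ is hermitean with $0\leq_{\Gamma}\mu$; its support lies in $\Sigma^k$, so $M:={\rm size}(\mu)\leq|\Sigma|^k$; one has $\|\mu\|_1=\|m(\Sigma)\|_1^k=1$, i.e.\ $\sum_g c_g=1$, and $\|\mu\|=\|m(\Sigma)\|^k=\rho(\Sigma)^k$; and, since $\lambda(\mu)\delta_e=\sum_g c_g\delta_g$, every coefficient satisfies $c_g=\langle\lambda(\mu)\delta_e,\delta_g\rangle\leq\|\lambda(\mu)\|=\rho(\Sigma)^k$.

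Next I would order the nonzero coefficients decreasingly, $c_{(1)}\geq c_{(2)}\geq\cdots\geq c_{(M)}>0$, and for each $j\in\{1,\dots,M\}$ consider the level set $S^{(j)}:=\{g\in\Gamma:\ c_g\geq c_{(j)}\}$. It is a symmetric subset of $\Sigma^k$ with $|S^{(j)}|\geq j$, and comparing coefficients termwise gives $c_{(j)}\sum_{g\in S^{(j)}}g\leq_{\Gamma}\mu$. The key step is to feed this into the spectral monotonicity \eqref{eqsp}: as both sides are hermitean and $\geq_{\Gamma}0$,
$$c_{(j)}\cdot|S^{(j)}|\cdot\rho\bigl(S^{(j)}\bigr)=\Bigl\|c_{(j)}\sum_{g\in S^{(j)}}g\Bigr\|\leq\|\mu\|=\rho(\Sigma)^k,\qquad\text{hence}\qquad\rho\bigl(S^{(j)}\bigr)\leq\frac{\rho(\Sigma)^k}{j\,c_{(j)}}.$$

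It then remains to choose $j$ so that $j\,c_{(j)}$ is as large as possible. Setting $\beta:=\max_{1\leq j\leq M}j\,c_{(j)}$, the bound $c_{(j)}\leq\beta/j$ combined with $\sum_{j=1}^M c_{(j)}=1$ forces $1\leq\beta\sum_{j=1}^M\tfrac1j\leq\beta(1+\ln M)\leq\beta(1+k\ln|\Sigma|)$, so $\beta\geq(1+k\ln|\Sigma|)^{-1}$, and taking $S_k:=S^{(j^*)}$ for an optimal index $j^*$ gives $\rho(S_k)\leq(1+k\ln|\Sigma|)\rho(\Sigma)^k\leq 4k\ln|\Sigma|\cdot\rho(\Sigma)^k$ (the last step since $k\geq1$, $|\Sigma|\geq2$). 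For the second part I would then use Kesten's theorem ($\rho(\Sigma)<1$) so that this bound tends to $0$; I also note $|S_k|\geq j^*\geq\beta/c_{(1)}\geq\bigl((1+k\ln|\Sigma|)\rho(\Sigma)^k\bigr)^{-1}\to\infty$. Fixing $k$ so large that $\rho(S_k)<\varepsilon/2$ and $|S_k|>2|\Sigma|/\varepsilon$, and putting $S:=S_k\cup\Sigma$, which is a finite symmetric generating set, the triangle inequality $\|\sum_{g\in S}g\|\leq\|\sum_{g\in S_k}g\|+|S\setminus S_k|\leq|S_k|\,\rho(S_k)+|\Sigma|$ together with $|S|\geq|S_k|$ yields $\rho(S)\leq\rho(S_k)+|\Sigma|/|S_k|<\varepsilon$.

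I do not expect a genuine obstacle: once the right object is identified, the whole argument is just termwise domination of elements of $\R[\Gamma]$ followed by the spectral monotonicity \eqref{eqsp}, with the elementary estimate $\sum_{j\leq M}1/j\leq 1+\ln M\leq 1+k\ln|\Sigma|$ producing the factor that appears in the statement. The only part that requires an idea rather than routine estimation is precisely that choice — one must weight $m(\Sigma)^k$ by coefficient size and pass to a level set of it, rather than use the uniform average over all of $\Sigma^k$, whose behaviour is governed by the geometry of the Cayley ball and is in general not comparable to $\rho(\Sigma)^k$.
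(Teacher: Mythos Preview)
Your argument is correct and follows the same core idea as the paper: dominate a constant multiple of the indicator of a level set of $m(\Sigma)^k$ by $m(\Sigma)^k$ itself, then invoke the coefficient-wise spectral monotonicity \eqref{eqsp}. The execution differs in two places, and in both your version is the more elementary one. For the pigeonhole step, the paper passes to a continuous rearrangement and proves an integral inequality (Lemma~\ref{main}) to extract the one-step function; you stay discrete and use $c_{(j)}\le\beta/j$ together with $\sum_{j\le M}1/j\le 1+\ln M$, which yields the slightly sharper factor $1+k\ln|\Sigma|$ in place of $4k\ln|\Sigma|$. For the ``moreover'' clause, the paper controls $|S_k|$ via the universal lower bound $\rho(S)\ge|S|^{-1/2}$ coming from comparison with a tree, whereas you get $|S_k|\to\infty$ directly from $c_{(1)}\le\rho(\Sigma)^k$, avoiding that extra ingredient. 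Either route works; yours packages the whole thing without auxiliary lemmas.
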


We need the following lemma to proceed.

\begin{lem} \label{main}
Let $f \colon [0,1] \to [0,1]$ be non-zero monotone decreasing function such that $\int_{0}^1 f(y)\ dy \leq \frac13$. Then, there exists some $x_0 \in [0,1]$, such that
$$x_0f(x_0) \geq \frac{\int_{0}^1 f(x)\ dx}{-4 \cdot \ln\left(\int_{0}^1 f(x)\ dx \right)}.$$
\end{lem}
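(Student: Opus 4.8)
The natural approach is a proof by contradiction, reducing the statement to an elementary one-variable inequality. Write $I := \int_0^1 f(x)\,dx$; by hypothesis $0 \le I \le \tfrac13$, and I may assume $I > 0$, since if $I = 0$ then monotonicity of $f$ forces $f(x) = 0$ for all $x \in (0,1]$ and the point $x_0 = 0$ makes both sides of the claimed inequality vanish. Set $c := I/(-4\ln I)$; since $-\ln I \ge \ln 3 > 1$ we have $0 < c < I/4 < 1$. Suppose, for contradiction, that $x f(x) < c$ for every $x \in [0,1]$ --- this is exactly the negation of the conclusion.

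Under this assumption the pointwise bound $f(x) \le \min\{1,\, c/x\}$ holds for all $x \in (0,1]$: indeed $f(x) \le 1$ by hypothesis, and $f(x) < c/x$ is the contradiction hypothesis. Since $0 < c < 1$, integrating this domination gives
\[
I \;=\; \int_0^1 f(x)\,dx \;\le\; \int_0^{c} 1\,dx + \int_{c}^{1} \frac{c}{x}\,dx \;=\; c - c\ln c \;=\; c\,(1 - \ln c).
\]

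To produce a contradiction it now suffices to verify that $c\,(1-\ln c) < I$ whenever $c = I/(-4\ln I)$ and $0 < I \le \tfrac13$. Substituting $t := -\ln I \in [\ln 3,\infty)$, so that $I = e^{-t}$, $c = e^{-t}/(4t)$ and $\ln c = -t - \ln(4t)$, the inequality $c(1-\ln c) < I$ becomes $1 + t + \ln(4t) < 4t$, i.e.
\[
1 + \ln 4 + \ln t < 3t .
\]
The function $t \mapsto 3t - \ln t - 1 - \ln 4$ has derivative $3 - 1/t > 0$ on $[\ln 3,\infty)$ and equals $3\ln 3 - \ln(\ln 3) - 1 - \ln 4 \approx 0.82 > 0$ at $t = \ln 3$, hence is positive throughout $[\ln 3,\infty)$ --- which is precisely the required inequality. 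This contradicts $I \le c(1-\ln c)$, so some $x_0 \in [0,1]$ satisfies $x_0 f(x_0) \ge c = I/(-4\ln I)$, as claimed.

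The heart of the matter, and the step where the constant $4$ has to be chosen with some care, is the passage from the contradiction hypothesis to the clean bound $I \le c(1-\ln c)$: the point is that a monotone decreasing function bounded by $1$ and satisfying $x f(x) < c$ everywhere is dominated by the explicit ``profile'' $\min\{1,\,c/x\}$, whose integral $c(1-\ln c)$ can be evaluated in closed form. After that, everything reduces to the scalar inequality $1 + \ln 4 + \ln t < 3t$, and the hypothesis $\int_0^1 f \le \tfrac13$ enters exactly to force $t = -\ln I \ge \ln 3$, comfortably inside the range where that inequality holds (it would already fail for $t$ close to $\tfrac14$).
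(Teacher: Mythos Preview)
Your proof is correct and follows essentially the same route as the paper's: argue by contradiction, dominate $f$ by the explicit profile $\min\{1,\,c/x\}$ with $c = I/(-4\ln I)$, integrate to obtain $I \le c(1-\ln c)$, and derive a contradiction from this scalar inequality. The only difference is cosmetic---the paper writes $c=\alpha I$ with $\alpha = 1/(-4\ln I)$ and finishes via the bound $-\ln\alpha < 1/(2\alpha)$, whereas you substitute $t=-\ln I$ and verify $1+\ln 4+\ln t < 3t$ directly at the endpoint $t=\ln 3$.
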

\begin{proof}
We set $I:= \int_{0}^1 f(y)\ dy>0$. First of all, we define $\alpha := \frac1{-4 \ln(I)}>0$ and note that $I \leq \frac13$ implies $\alpha \leq \frac{1}{4 \ln(3)}\leq \frac14$. Assume that we have
$x f(x) < \alpha  I$ for all $x \in [0,1].$
Then, since $f$ is decreasing and $f(0) \leq 1$, we have
$$f(x) \leq \min \left \{1,\frac{\alpha I}x  \right \}, \quad \forall x \in [0,1]$$ and thus, we can estimate the integral as follows:
\begin{equation} \label{eq1}
I \leq \alpha  I + \alpha  I \cdot \int_{\alpha  I}^1 \frac1x\ dx = \alpha  I - \alpha  I \cdot \ln(\alpha  I).
\end{equation}
Using $\alpha \leq \frac14$, we get
$$\frac3{4\alpha} \leq \frac{1-\alpha}{\alpha} \stackrel{\eqref{eq1}}{\leq}  - \ln(\alpha  I).$$ 
Since $- \ln(\alpha) < \frac{1}{2\alpha}$ for all $\alpha \in (0,\infty)$, the inequality above implies
\begin{equation} \label{eq2}
\frac{1}{4\alpha}  < - \ln(I),
\end{equation}
in contradiction with our choice of $\alpha$. This proves the claim.
\end{proof}

\begin{rem}
The inequality in Lemma \ref{main} cannot be improved to $x_0 f(x_0) \geq \alpha \int_{0}^1 f(y)\ dy$ for a fixed $\alpha>0$ independent of $f$. This follows from a computation with the functions $f_n(x) = \min\{1,\frac{1}{nx}\}$ for $n \in \N$.
\end{rem}

We say that $b \in \R[\Gamma]$ is a one-step function if the set $\{b_g \mid g \in \Gamma, b_g\neq 0 \}$ is a singleton, i.e.\ $b$ is a multiple of a characteristic function. The main idea in the proof of Theorem \ref{mainthm} is to find a large one-step function with non-negative coefficients which is coefficient-wise smaller than a suitable power of the Markov operator.

\begin{cor} \label{cor}
Let $\Gamma$ be a group and let $a \in \R[\Gamma]$ be a hermitean element with non-negative coefficients and ${\rm size}(a) \geq 3.$ Then, there exists a hermitean one-step function $b \in \R[\Gamma]$ such that
$0 \leq_{\Gamma} b \leq_{\Gamma} a$ and
$$\|b\|_1 \geq \frac14 \cdot \frac{\|a\|_1}{\ln ( {\rm size}(a))}.$$
\end{cor}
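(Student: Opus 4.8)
The plan is to encode the list of coefficients of $a$ as a monotone step function on $[0,1]$ and feed it to Lemma~\ref{main}. Both the hypotheses and the conclusion of the corollary are invariant under replacing $a$ by $\mu a$ for a constant $\mu>0$ (if $b$ is a one-step function with $0\leq_\Gamma b\leq_\Gamma a$, then $\mu b$ is a one-step function with $0\leq_\Gamma \mu b\leq_\Gamma\mu a$, and $\|\cdot\|_1$, ${\rm size}$ scale accordingly), so I would first normalise $\|a\|_1=1$. Enumerate ${\rm supp}(a)=\{g_1,\dots,g_n\}$ with $n:={\rm size}(a)\ge 3$ and set $c_i:=a_{g_i}$, relabelling so that $c_1\ge c_2\ge\cdots\ge c_n>0$; in particular $c_1\le\|a\|_1=1$.

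I would then consider $f\colon[0,1]\to[0,1]$ defined by $f(0):=c_1$ and $f(y):=c_{\lceil ny\rceil}$ for $y\in(0,1]$. This $f$ is non-zero, monotone decreasing, takes values in $(0,c_1]\subseteq[0,1]$, and
\[
\int_0^1 f(y)\,dy=\frac1n\sum_{i=1}^n c_i=\frac{\|a\|_1}{n}=\frac1n\le\frac13,
\]
since $n\ge 3$. Thus Lemma~\ref{main} applies and yields $x_0\in[0,1]$ with $x_0 f(x_0)\ge\tfrac{1/n}{-4\ln(1/n)}=\tfrac{1}{4n\ln n}>0$. The positivity forces $x_0\neq 0$, so $i:=\lceil n x_0\rceil$ lies in $\{1,\dots,n\}$, $f(x_0)=c_i$, and $n x_0\le i$; multiplying the inequality of Lemma~\ref{main} by $n$ gives $i\,c_i\ge n x_0 f(x_0)\ge\tfrac{1}{4\ln n}$.

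Finally I would let $b\in\R[\Gamma]$ be the element with $b_g:=c_i$ whenever $a_g\ge c_i$ and $b_g:=0$ otherwise. Since $a$ is hermitean with real non-negative coefficients we have $a_g=a_{g^{-1}}$, so the threshold set $\{g\mid a_g\ge c_i\}$ is symmetric and contains $g_1,\dots,g_i$; hence $b$ is a hermitean one-step function, it satisfies $0\leq_\Gamma b\leq_\Gamma a$ by construction, and
\[
\|b\|_1=c_i\cdot\bigl|\{g\mid a_g\ge c_i\}\bigr|\ge i\,c_i\ge\frac{1}{4\ln n}=\frac14\cdot\frac{\|a\|_1}{\ln({\rm size}(a))}.
\]
Undoing the normalisation then yields the corollary. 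I do not anticipate a genuine obstacle here; the step that requires the most thought is the choice of normalisation — scaling to $\|a\|_1=1$ (rather than, say, $\max_g a_g=1$) is exactly what makes the integral bound $\int_0^1 f=1/n\le 1/3$ automatic, so that Lemma~\ref{main} applies with no case distinction. What remains, namely passing from the real number $x_0$ to the integer index $i$ and bounding the size of the threshold set below by $i$, is routine bookkeeping.
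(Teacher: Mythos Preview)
Your proof is correct and follows essentially the same approach as the paper: encode the sorted coefficients as a monotone step function on $[0,1]$, normalise so that its integral equals $1/\mathrm{size}(a)\le 1/3$, apply Lemma~\ref{main}, and read off the one-step function $b$ from the resulting threshold. Your handling of the passage from $x_0$ to the integer index $i$ and of possible ties at the threshold value is a bit more explicit than the paper's, but the argument is the same.
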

\begin{proof} We write $a = \sum_g a_g g$ with $a_g \in [0,\infty)$. All coefficients of $a$ are bounded from above by $\|a\|_1$. Choose an enumeration $n \mapsto g_n$ of the set ${\rm supp}(a)$ such that $n \mapsto a_{g_n}$ is monotone decreasing and set
$f(x) := a_{g_n}\|a\|^{-1}_1$ if ${\rm size}(a) \cdot x$ lies in the interval  
$[n-1,n)$.
Thus, we have got a monotone decreasing function $f \colon [0,1] \to [0,1]$ and
$$\int_{0}^1 f(y)\ dy = \frac{1}{{\rm size}(a)} \leq \frac13.$$
By Lemma \ref{main}, there exists $x_0 \in [0,1]$ such that
$$x_0f(x_0) \geq \frac{\int_{0}^1 f(x)\ dx}{-4 \cdot \ln\left(\int_{0}^1 f(x)\ dx \right)}.$$
Now, the function 
$$g(x):= \begin{cases} f(x_0) & x \in [0,x_0] \\
0 & x \in (x_0,1] \end{cases}$$
satisfies $0 \leq g(x) \leq f(x)$ for all $x \in [0,1]$ and $\int_{0}^1 g(y) \ dy= x_0 f(x_0)$.
Thus, by renormalization we find a one-step function $b \in \R[\Gamma]$ with $0 \leq_{\Gamma }b \leq_{\Gamma} a$ and
$$\|b\|_1 \geq \frac{\|a\|_1}{4 \cdot \ln ({\rm size}(a))}.$$
Since the level-sets of $a$ are symmetric subsets of $\Gamma$, we may choose $b$ hermitean. This finishes the proof.
\end{proof}

We are now ready to prove our main result.

\begin{proof}[Proof of Theorem \ref{mainthm}:] Consider the Markov operator $m(\Sigma) \in \R[\Gamma]$. Since $\Gamma$ is not amenable, we have ${\rm size}(m(\Sigma)) = |\Sigma| \geq 3$. Let $b_k \in \R[\Gamma]$ be the non-negative hermitean one-step function, whose existence is predicted by Corollary \ref{cor} applied to $m(\Sigma)^k \in \R[\Gamma]$. We set $S_k := {\rm supp}(b_k) \subset \Sigma^k$. Then
$$\rho(S_k) = \frac{\|b_k\|}{\|b_k\|_1} \leq \frac{4 \cdot \ln(|\Sigma^k|)}{\|m(\Sigma)^k\|_1}  \cdot\|b_k\|\stackrel{\eqref{eqsp}}{\leq} 4 \cdot \ln(|\Sigma^k|) \cdot \|m(\Sigma)^k\| \leq 4k \cdot \ln(|\Sigma|) \cdot \rho(\Sigma)^k.$$
This proves the first claim.

It remains to show that we also find finite symmetric generating sets, whose associated spectral radius is as small as we want.  Take $S'_n := S_n \cup \Sigma$, then
$$\rho(S'_n) \leq \frac{|S_n|}{|S_n \cup \Sigma|} \cdot \rho(S_n) + \frac{|\Sigma \setminus S_n|}{|S_n \cup \Sigma|} \cdot \rho(\Sigma)$$
by the triangle inequality. For all finite symmetric subsets $\rho(S) \geq |S|^{-1/2}$ by comparison with a regular tree, so that $|S_n|^{-1} \leq \rho(S_n)^2$ for all $n \in \N$. We get
$$\rho(S'_n) \leq \rho(S_n) + |\Sigma| \cdot \rho(\Sigma) \cdot \rho(S_n)^2 = O(k \rho(\Sigma)^k).$$ This finishes the proof.
\end{proof}

\begin{rem}
If $\Gamma$ contains a free subgroup, then the first part of the proof of Theorem \ref{mainthm} is trivial. Indeed, any free group contains free groups of arbitrary rank. Moreover, if $S_n = \{a_1,a_1^{-1},\dots,a_n,a_n^{-1}\}$, where $a_1,\dots,a_n$ are the basis for a free group, then it is well-known that $\rho(S_n) = O(n^{-1/2})$.
\end{rem}

\section{Some applications}

\subsection{Quantitative statements}

As a consequence of Theorem \ref{mainthm}, we can prove the following characterization of amenability.
\begin{cor} \label{char}
Let $\Gamma$ be a group. Suppose that for all $\varepsilon>0$, there exists $n(\varepsilon) \in \N$ such that for all finite symmetric subset $S \subset \Gamma$ with $|S| \geq n(\varepsilon)$, we have $\rho(S) \geq |S|^{-\varepsilon}$. Then, $\Gamma$ is amenable. 

Equivalently, for any non-amenable group, there exists some $\varepsilon>0$, such that there exist arbitrarily large finite symmetric subsets $S$ with $\rho(S) < |S|^{-\varepsilon}$.
\end{cor}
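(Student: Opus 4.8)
The plan is to prove the contrapositive of the first statement: assuming $\Gamma$ is non-amenable, I will produce a single $\varepsilon>0$ and arbitrarily large finite symmetric sets $S$ with $\rho(S)<|S|^{-\varepsilon}$, which simultaneously establishes the ``equivalently'' reformulation. Fix any finite symmetric generating set $\Sigma$ of $\Gamma$. By Kesten's theorem $\rho(\Sigma)<1$, so set $c:=-\ln\rho(\Sigma)>0$. The idea is to feed the sets $S_k\subset\Sigma^k$ from Theorem \ref{mainthm} into the desired inequality, tracking both $\rho(S_k)$ and $|S_k|$ as functions of $k$.

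First I would record the two bounds I need. From Theorem \ref{mainthm} we have $\rho(S_k)\le 4k\ln(|\Sigma|)\rho(\Sigma)^k$, and from the comparison with a regular tree used in the proof of that theorem, $|S_k|\le\rho(S_k)^{-2}$; combined these give $|S_k|\le\rho(S_k)^{-2}\le\bigl(4k\ln(|\Sigma|)\bigr)^{-2}\rho(\Sigma)^{-2k}\le\rho(\Sigma)^{-2k}=e^{2ck}$. Hence $\ln|S_k|\le 2ck$, i.e.\ $k\ge\frac{1}{2c}\ln|S_k|$. On the other hand $\rho(S_k)\le 4k\ln(|\Sigma|)e^{-ck}$, so $-\ln\rho(S_k)\ge ck-\ln(4k\ln|\Sigma|)$. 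Since $\ln(4k\ln|\Sigma|)=o(k)$, for all $k$ large enough we get $-\ln\rho(S_k)\ge\frac{c}{2}k\ge\frac{c}{2}\cdot\frac{1}{2c}\ln|S_k|=\frac14\ln|S_k|$, that is, $\rho(S_k)\le|S_k|^{-1/4}$.

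So I would set $\varepsilon:=\tfrac18$ (any constant strictly below $\tfrac14$ works, leaving room to absorb the polynomial-in-$k$ factor cleanly) and take $S:=S_k$ for large $k$. To see that the $S_k$ can be taken arbitrarily large, note $|S_k|\ge\rho(S_k)^{-2}\ge\bigl(4k\ln|\Sigma|\,\rho(\Sigma)^k\bigr)^{-2}$ is not by itself obviously unbounded, so instead I would argue directly: if the $|S_k|$ stayed bounded, then $S_k$ would range over a finite collection of subsets of $\Gamma$, hence $\rho(S_k)$ would take only finitely many values, all positive, contradicting $\rho(S_k)\le 4k\ln(|\Sigma|)\rho(\Sigma)^k\to 0$. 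Therefore $|S_k|\to\infty$ along a subsequence, and along that subsequence $\rho(S_k)\le|S_k|^{-1/8}$ for $k$ large, which is exactly the assertion.

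The only mild subtlety—and the one place to be careful rather than a genuine obstacle—is making sure the sub-exponential correction factor $4k\ln|\Sigma|$ is genuinely dominated: one must use that $|S_k|\to\infty$ (so that $\ln|S_k|\to\infty$ and the inequality $-\ln\rho(S_k)\ge\varepsilon\ln|S_k|$ eventually holds with $\varepsilon=\tfrac18$ despite the $\ln k$ term), rather than trying to get a clean inequality for every single $k$. Everything else is bookkeeping with the estimates already established in the proof of Theorem \ref{mainthm}.
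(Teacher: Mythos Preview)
Your argument has a genuine error: you reversed the direction of the regular-tree comparison. The inequality $\rho(S)\ge |S|^{-1/2}$ (which the paper invokes) is equivalent to $|S|\ge \rho(S)^{-2}$, not $|S|\le \rho(S)^{-2}$ as you write in your second paragraph. (You even use the correct direction yourself in the third paragraph when you write $|S_k|\ge\rho(S_k)^{-2}$.) This destroys your derivation of $\ln|S_k|\le 2ck$, and with it the claimed universal exponent $\varepsilon=\tfrac14$.

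The repair is immediate and lands you exactly on the paper's proof: use the trivial containment $S_k\subset\Sigma^k$ to get $|S_k|\le|\Sigma|^k$, hence $k\ge \ln|S_k|/\ln|\Sigma|$. Your estimate $-\ln\rho(S_k)\ge \tfrac{c}{2}k$ for large $k$ then gives $\rho(S_k)\le |S_k|^{-\varepsilon}$ with $\varepsilon=\tfrac{c}{2\ln|\Sigma|}=-\tfrac{\ln\rho(\Sigma)}{2\ln|\Sigma|}$, which is precisely the $\varepsilon$ the paper chooses. So the approach is the same as the paper's; only the upper bound on $|S_k|$ was wrong.

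Two smaller points. First, your ``finite collection of subsets'' argument for $|S_k|\to\infty$ is invalid: there are infinitely many subsets of bounded size in an infinite group. But it is also unnecessary: with the correct tree inequality you already have $|S_k|\ge\rho(S_k)^{-2}\ge (4k\ln|\Sigma|)^{-2}\rho(\Sigma)^{-2k}$, and this \emph{is} unbounded since $\rho(\Sigma)^{-2k}$ grows exponentially while $(4k\ln|\Sigma|)^{2}$ grows only polynomially --- you wrote this chain down and then dismissed it by mistake. Second, before fixing a finite symmetric generating set $\Sigma$ you should pass to a finitely generated non-amenable subgroup, as the paper does.
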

\begin{proof}Without loss of generality, we may assume that $\Gamma$ is finitely generated.
We prove the second version.
Indeed, suppose that $\Sigma$ is a finite symmetric generating set and $\Gamma$ is non-amenable. Then $\rho(\Sigma)<1$ and we set $\varepsilon:= - \frac{\ln \rho(\Sigma)}{2\ln |\Sigma|}>0$. Let $S_k$ be a sequence of finite subsets as in the statement of Theorem \ref{mainthm}. Then, we get with $n:= |\Sigma|^k$:
$$\rho(S_k) \leq 4 \ln(n) \cdot \rho(\Sigma)^k =4 \ln(n) \cdot n^{\frac{\ln(\rho(\Sigma))}{\ln(|\Sigma|)}} = \frac{4\ln(n)}{n^{2\varepsilon}} < n^{-\varepsilon} \leq |S_k|^{-\varepsilon},$$
where the second inequality holds if $n$ (and hence $k$) is large enough. This proves the claim.
\end{proof}

As a direct consequence we get:

\begin{cor} Let $f \colon \N \to [0,\infty)$ be a monotone increasing function with $f(n)=o(\ln(n))$. 
Let $\Gamma$ be a group. If $\rho(S) \geq \exp(-f(|S|))$ for all finite symmetric subsets $S \subset \Gamma$, then $\Gamma$ is amenable.
In particular, if $\rho(S) \geq \frac{1}{\ln(|S|)}$ for all finite symmetric subsets $S \subset \Gamma$, then $\Gamma$ is amenable.
\end{cor}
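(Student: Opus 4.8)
The plan is to deduce this directly from Corollary \ref{char} by checking that its hypothesis holds. So fix $\varepsilon > 0$; I need to produce a threshold $n(\varepsilon) \in \N$ beyond which $\rho(S) \geq |S|^{-\varepsilon}$ for every finite symmetric $S \subset \Gamma$. Since $f(n) = o(\ln n)$, there is an $n(\varepsilon)$ such that $f(n) \leq \varepsilon \ln n$ for all $n \geq n(\varepsilon)$. Combining this with the standing assumption, for every finite symmetric $S$ with $|S| \geq n(\varepsilon)$ one gets
$$\rho(S) \geq \exp(-f(|S|)) \geq \exp(-\varepsilon \ln |S|) = |S|^{-\varepsilon},$$
where the first inequality is the hypothesis and the second uses monotonicity of $\exp$. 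Thus the hypothesis of Corollary \ref{char} is satisfied, and $\Gamma$ is amenable.

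For the "in particular" clause I would apply the first part to the function $f(n) := \ln\ln(n)$, extended by $f(n):=0$ for the finitely many $n$ with $\ln\ln n < 0$; this only weakens the hypothesis on the corresponding (finitely many, small) sets $S$ and is therefore harmless. This $f$ is monotone increasing with values in $[0,\infty)$, and $\ln\ln(n) = o(\ln n)$, while $\exp(-f(n)) = 1/\ln n$ for $n \geq 3$. Hence the assumption $\rho(S) \geq 1/\ln|S|$ implies $\rho(S) \geq \exp(-f(|S|))$ for all sufficiently large finite symmetric $S$, which is exactly what the first part of the argument uses.

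I do not expect a genuine obstacle here: all the substance sits in Corollary \ref{char} (and ultimately in Theorem \ref{mainthm}), and what remains is the routine observation that the condition $f = o(\ln)$ is precisely what allows $f(|S|)$ to be absorbed into $\varepsilon\ln|S|$ uniformly for large $S$. The only bookkeeping is interpreting $\ln\ln n$ for small $n$, which does not affect the conclusion.
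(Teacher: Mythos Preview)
Your proof is correct and follows essentially the same route as the paper: both reduce directly to Corollary \ref{char} by noting that $f(n)=o(\ln n)$ yields $\exp(-f(n))\geq n^{-\varepsilon}$ for all large $n$, and both handle the particular case by taking $f(n)=\ln\ln n$. Your version is simply more explicit about the threshold $n(\varepsilon)$ and the bookkeeping for small $n$.
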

\begin{proof}
Indeed, if $f(n) = o(\ln(n))$, then $\exp(-f(n)) \geq n^{-\varepsilon}$ eventually for all $\varepsilon>0$. Hence, the result follows from Corollary \ref{char}. The second claim follows taking $f(n)=\ln(\ln(n))$.
\end{proof}

\subsection{Percolation theory} We can improve on \cite[Theorem 1]{MR1756965} and can get around the implicit use of multi-sets in the statement of the theorem. This settles a particular case of a conjecture by Benjamini-Schramm, see \cite{MR1423907, peres} for definitions and background on percolation theory.
	
\begin{cor}
Let $\Gamma$ be a finitely generated non-amenable group. Then there exists a finite symmetric set of generators $S$
in $\Gamma$ such that $p_c(\Gamma,S) < p_u(\Gamma,S)$.
\end{cor}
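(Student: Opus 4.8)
The plan is to deduce this from Theorem~\ref{mainthm} in exactly the way Pak--Smirnova-Nagnibeda deduced their version from Kesten's theorem, the only new input being that we now have genuine finite symmetric generating sets, rather than multi-sets, at our disposal.

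First I would recall the shape of the argument in \cite{MR1756965}. For a finitely generated non-amenable group $\Gamma$ and a finite symmetric generating (multi-)set $S$, the proof of \cite[Theorem 1]{MR1756965} provides quantitative bounds -- an upper bound on $p_c(\Gamma,S)$ and a lower bound on $p_u(\Gamma,S)$ -- that depend on $S$ only through its cardinality $|S|$ (counted with multiplicity) and the spectral radius $\rho(S)=\|m(S)\|$, and that force $p_c(\Gamma,S)<p_u(\Gamma,S)$ as soon as $\rho(S)$ lies below some threshold $\theta(|S|)$. This threshold necessarily decays slowly -- slower than any fixed power of $1/|S|$ -- since for a group that is only barely non-amenable one can force $\rho(S)$ down only to $|S|^{-\beta}$ with $\beta>0$ arbitrarily small. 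In \cite{MR1756965} the condition $\rho(S)<\theta(|S|)$ is met by taking $S=\Sigma^{[k]}$, the multi-set of all length-$k$ words over a fixed symmetric generating set $\Sigma$: then $|S|=|\Sigma|^k$ while $\rho(S)=\rho(\Sigma)^k$, which for $k$ large is below $\theta(|\Sigma|^k)$ -- at the cost of leaving the realm of honest generating sets.

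Now I would feed Theorem~\ref{mainthm} into the same estimates. Take $S:=S_k\cup\Sigma$ with $S_k\subseteq\Sigma^k$ as in Theorem~\ref{mainthm}, so that $S$ is an honest finite symmetric generating set with $|S|\le|\Sigma|^k+|\Sigma|$ and $\rho(S)=O\big(k\,\rho(\Sigma)^k\big)$. Thus $\rho(S)$ still decays essentially like $|S|^{-\beta}$ for a positive group-dependent exponent $\beta$, spoiled only by a harmless factor $O(k)=O(\ln|S|)$; in particular $\rho(S)<\theta(|S|)$ once $k$ is large enough. Since the bounds on $p_c$ and $p_u$ in \cite{MR1756965} see nothing but $|S|$ and $\rho(S)$, they apply verbatim to this $S$ and yield $p_c(\Gamma,S)<p_u(\Gamma,S)$.

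The main point to check -- and the only place requiring any care -- is this last compatibility claim: that the threshold $\theta$ implicit in \cite{MR1756965} is mild enough to absorb the extra $O(\ln|S|)$ factor coming from Theorem~\ref{mainthm}. This should be automatic: Pak--Smirnova-Nagnibeda already verify $\rho(S)<\theta(|S|)$ for the multi-set $\Sigma^{[k]}$, whose spectral radius $\rho(\Sigma)^k$ decays at the very same (merely polynomial, group-dependent) rate in $|S|$ as ours does, up to the sub-polynomial factor $O(k)$. So one only needs to re-run their computation with $\Sigma^{[k]}$ replaced by $S_k\cup\Sigma$, which changes nothing essential; no new percolation-theoretic input is needed.
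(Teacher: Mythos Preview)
Your proposal is correct and takes essentially the same approach as the paper: the paper's own proof is the single sentence ``Having Theorem~\ref{mainthm} at hand, the proof proceeds as in \cite{MR1756965},'' and you have simply unpacked what that sentence means. Your identification of the one point requiring care---that the implicit threshold in \cite{MR1756965} tolerates the extra $O(\ln|S|)$ factor coming from Theorem~\ref{mainthm}---is exactly right, and is indeed routine once one re-runs the Pak--Smirnova-Nagnibeda estimates with $S_k\cup\Sigma$ in place of the multi-set $\Sigma^{[k]}$.
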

\begin{proof} Having Theorem \ref{mainthm} at hand, the proof proceeds as in \cite{MR1756965}.
\end{proof}

In a similar direction we obtain:
\begin{cor}
Let $\Gamma$ be a finitely generated non-amenable group. Then the expected degree of ${\rm FMSF}(\Gamma,S)$ is unbounded when $S$ varies among finite symmetric generating sets. In particular, the expected degree of ${\rm FMSF}(\Gamma,S)$ depends on $S$ and there exists a finite symmetric generating set $S$ such that ${\rm WMSF}(\Gamma,S) \neq {\rm FMSF}(\Gamma,S)$. 
\end{cor}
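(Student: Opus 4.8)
The plan is to prove a single quantitative estimate, namely
\[
\mathbb{E}\big[\deg_{{\rm FMSF}}(\Gamma,S)\big] \;\geq\; \frac{1}{4\,\rho(S)}
\]
for every finite symmetric generating set $S$, and then to feed in Theorem \ref{mainthm}. We may assume $\Gamma$ is finitely generated, since the statement only concerns finite generating sets.

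To get the estimate, fix $S$, let $o$ be the identity of $\Gamma$ regarded as the root of the Cayley graph, and let $e=\{o,os\}$ be the edge along a generator $s\in S$. I would use the standard coupling: assign i.i.d.\ uniform labels $U_{e'}\in[0,1]$ to the edges; then $e\in{\rm FMSF}$ precisely when the endpoints of $e$ are not joined by a path all of whose labels are smaller than $U_e$. Conditioning on $U_e=p$, the remaining labels are still i.i.d.\ uniform, so the edges with label $<p$ form Bernoulli$(p)$ percolation on $E\setminus\{e\}$, and therefore $\mathbb{P}[e\in{\rm FMSF}]=\int_0^1\big(1-\mathbb{P}_p[o\leftrightarrow os\text{ off }e]\big)\,dp\geq\int_0^1\big(1-\tau_p(o,os)\big)\,dp$, where $\tau_p$ is the two-point function of Bernoulli$(p)$ percolation on the whole Cayley graph (removing $e$ can only disconnect the endpoints, so this inequality is harmless). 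The only non-trivial input is an upper bound on $\tau_p(o,os)$: the number of walks of length $k$ from $o$ to $os$ is the coefficient of $s$ in $\big(\sum_{t\in S}t\big)^k=\big(|S|\,m(S)\big)^k$, hence at most $\bigl\||S|\,m(S)\bigr\|^k=\big(|S|\,\rho(S)\big)^k$ because $m(S)$ is hermitean. Bounding the connection probability by the expected number of open self-avoiding paths from $o$ to $os$ gives $\tau_p(o,os)\leq\sum_{k\geq1}\big(|S|\rho(S)p\big)^k\leq 2|S|\rho(S)p$ for $p\leq\frac{1}{2|S|\rho(S)}$, and a one-line integral over this range yields $\mathbb{P}[e\in{\rm FMSF}]\geq\frac{1}{4|S|\rho(S)}$. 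Each of the $|S|$ edges incident to $o$ satisfies the same bound, so summing gives the claimed inequality.

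Now Theorem \ref{mainthm} provides, for any $\varepsilon>0$, a finite symmetric generating set $S$ with $\rho(S)<\varepsilon$, hence $\mathbb{E}[\deg_{{\rm FMSF}}(\Gamma,S)]>\frac{1}{4\varepsilon}$; so the expected degree is unbounded, and in particular not independent of $S$ (it is always at most $|S|$). For the last assertion I would recall that, by Lyons--Peres--Schramm, every component of the wired minimal spanning forest is one-ended, so a mass-transport argument on the unimodular Cayley graph gives $\mathbb{E}[\deg_{{\rm WMSF}}(\Gamma,S)]=2$ for every $S$; picking $S$ with $\rho(S)$ small enough that $\mathbb{E}[\deg_{{\rm FMSF}}(\Gamma,S)]>2$ then forces ${\rm WMSF}(\Gamma,S)\neq{\rm FMSF}(\Gamma,S)$ since ${\rm WMSF}\subseteq{\rm FMSF}$ always. (Alternatively, combine the previous corollary --- $p_c(\Gamma,S)<p_u(\Gamma,S)$ for some $S$ --- with the fact, also due to Lyons--Peres--Schramm, that $p_c<p_u$ already forces ${\rm WMSF}\neq{\rm FMSF}$.)

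I do not expect a genuine obstacle once Theorem \ref{mainthm} is in hand: the argument is a short computation whose only delicate points are that the ``off $e$'' clause in the FMSF criterion only helps the lower bound, and that one uses the correct normalization $\|m(S)^k\|=\rho(S)^k$ in the walk count --- this is exactly what Theorem \ref{mainthm} makes usable, since for a fixed group $\rho$ of a fixed finite set is bounded away from $0$ and the classical work-around of passing to powers $m(S)^k$ leaves the combinatorial ``degree'' ill-defined. The one external ingredient I would take care to cite precisely is the one-endedness of the components of the WMSF (equivalently, that its expected degree equals $2$).
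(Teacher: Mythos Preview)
Your argument is correct and follows the same logical skeleton as the paper: combine Theorem~\ref{mainthm} with a lower bound on the expected FMSF degree in terms of $\rho(S)$, then invoke the Lyons--Peres--Schramm one-endedness of WMSF components to separate the two forests. The paper's own proof, however, is a single sentence that cites Theorem~\ref{mainthm} together with \cite[Corollary~2]{thomdegree}; that external result is precisely the inequality of the type $\mathbb{E}[\deg_{{\rm FMSF}}(\Gamma,S)] \geq c/\rho(S)$ that you reprove here from scratch via the standard uniform-label coupling and the walk-count bound $\langle m(S)^k\delta_e,\delta_s\rangle \leq \rho(S)^k$.

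So the difference is one of packaging rather than of strategy: the paper black-boxes the degree estimate, while you supply a self-contained proof of it. What your version buys is transparency --- the reader sees exactly where the spectral radius enters (through the exponential decay of walk counts, hence of the two-point function below $p_c$). What the paper's version buys is brevity and a cleaner separation of concerns, since the degree estimate in \cite{thomdegree} is stated in somewhat greater generality. Your care about the ``off $e$'' clause and about citing the one-endedness of WMSF precisely is well placed; the only cosmetic point is that the range of integration $[0,1/(2|S|\rho(S))]$ silently uses $|S|\rho(S)\geq 1$, which follows from $\rho(S)\geq |S|^{-1/2}$ as noted in the paper.
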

\begin{proof} This is a consequence of Theorem \ref{mainthm} and \cite[Corollary 2]{thomdegree}.
\end{proof}

\subsection{Operator spaces}
\label{opsp}
For background on operator spaces consult Pisier's book \cite{pisier}. Let ${\mathbb F}_n$ be the free group on generators $g_1,\dots,g_n$ and $C_{\rm red}^*({\mathbb F}_n)$ denote its reduced group $C^*$-algebra. We denote by $E_n$ the operator space spanned by $\lambda(g_1),\dots,\lambda(g_n) \in C_{\rm red}^*({\mathbb F}_n)$. For any operator space $F$, we set 
$$\gamma_{F}(E_n) := \inf \|u\|_{\rm cb}\|v\|_{\rm cb},$$
where the infimum is taken over all factorizations $E_n \stackrel{u}{\to} F \stackrel{v}{\to} E_n$ of the identity ${\rm id_n} \colon E_n \to E_n$. 

Let $\Gamma$ be a countable group and let $L(\Gamma)$ be its group von Neumann algebra. Haagerup-Pisier \cite{haapis} proved that if $\Gamma$ is amenable group, then
$\gamma_{L(\Gamma)}(E_n) \geq \frac{n^{1/2}}2$ for all $n \in \N$. Pisier \cite{pisiernotes} raised the question if there is a converse to this result and in this context he also asked if Theorem \ref{mainthm} could be proved.
\begin{thm} \label{appop}
Let $\Gamma$ be a countable non-amenable group. There exists $\delta>0$, such that
$$\liminf_{n \to \infty} \frac{\gamma_{L(\Gamma)}(E_n)}{n^{1/2-\delta}} = 0.$$
\end{thm}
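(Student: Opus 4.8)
The plan is to turn the quantitative refinement of Kesten's theorem recorded in Corollary \ref{char} into a factorization estimate for the operator space $E_n$. First I would reduce to the case that $\Gamma$ is finitely generated: a countable non-amenable group contains a finitely generated non-amenable subgroup $\Gamma_0$, and since $L(\Gamma_0)\subset L(\Gamma)$ is the image of a trace-preserving conditional expectation, composing a factorization of $\id_{E_n}$ through $L(\Gamma_0)$ with the inclusion $L(\Gamma_0)\hookrightarrow L(\Gamma)$ and with that conditional expectation yields $\gamma_{L(\Gamma)}(E_n)\le\gamma_{L(\Gamma_0)}(E_n)$. So assume $\Gamma$ is finitely generated and non-amenable. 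By Corollary \ref{char} there is $\varepsilon>0$ such that for infinitely many $n$ there is a symmetric subset $S=\{s_1,\dots,s_n\}\subset\Gamma$ with pairwise distinct elements and $\rho(S)<n^{-\varepsilon}$; the goal is to show that $\gamma_{L(\Gamma)}(E_n)$ is much smaller than $\sqrt n$ along this sequence.

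For such an $S$, consider the maps $u\colon E_n\to L(\Gamma)$ determined by $u(\lambda(g_i)):=\lambda(s_i)$ and $v\colon L(\Gamma)\to E_n\subset C_{\rm red}^*(\F_n)$ given by $v(x):=\sum_{i=1}^n\tau(\lambda(s_i)^*x)\,\lambda(g_i)$. Since the $s_i$ are distinct, $v\circ u=\id_{E_n}$, so $\gamma_{L(\Gamma)}(E_n)\le\|u\|_{\rm cb}\|v\|_{\rm cb}$, and throughout I would exploit Haagerup's inequality, i.e.\ that $E_n$ is $2$-completely isomorphic to $R_n\cap C_n$, which reduces the two cb-norms to row- and column-norm estimates. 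For $\|v\|_{\rm cb}$ one gets the bound $2$: given $X\in M_N(L(\Gamma))$, the matrix coefficients $c_i:=(\id\otimes\tau)\big((1\otimes\lambda(s_i))^*X\big)$ satisfy $(\id\otimes v)(X)=\sum_i c_i\otimes\lambda(g_i)$, hence $\|(\id\otimes v)(X)\|\le 2\max\big(\|(\sum_i c_ic_i^*)^{1/2}\|,\|(\sum_i c_i^*c_i)^{1/2}\|\big)$ by Haagerup, and a Bessel-inequality argument — using that the families $\{\delta_{s_i}\}_i$ and $\{\delta_{s_i^{-1}}\}_i$ are orthonormal in $\ell^2\Gamma$ — bounds both of these by $\|X\|$.

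The substantive estimate is $\|u\|_{\rm cb}\le C\sqrt{n\,\rho(S)}$ with $C$ absolute, that is, the Haagerup-type inequality
$$\Big\|\sum_{i=1}^n y_i\otimes\lambda(s_i)\Big\|\ \le\ C\sqrt{n\,\rho(S)}\;\max\Big(\big\|(\textstyle\sum_i y_iy_i^*)^{1/2}\big\|,\ \big\|(\textstyle\sum_i y_i^*y_i)^{1/2}\big\|\Big),$$
to hold for all $N$ and all $N\times N$ matrices $y_i$. When $\rho(S)=1$ this is nothing but the trivial bound $\|u\|_{\rm cb}\le\sqrt n$, which is essentially optimal for amenable $\Gamma$ by Haagerup–Pisier; so the gain comes entirely from $\rho(S)<1$. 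The structural fact that makes it go through is the monotonicity $\rho(S')\le\frac{|S|}{|S'|}\rho(S)$ for symmetric subsets $S'\subset S$, which follows from the coefficientwise inequality $\frac{|S'|}{|S|}m(S')\le_{\Gamma}m(S)$ together with \eqref{eqsp}: it forbids $S$ from containing large ``nearly amenable'' sub-configurations, which are exactly the configurations that would otherwise keep $\|u\|_{\rm cb}$ as large as $\sqrt n$ (testing against $y_i=I$ on all of $S$, or on such an $S'$, one sees that $\sqrt{n\,\rho(S)}$ is the right order, and that even a weaker bound $\sqrt n\,\rho(S)^{c}$ for some fixed $c>0$ would already be enough below). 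I expect this inequality to be the main obstacle; it is precisely the converse to Haagerup–Pisier that Pisier asked about, and the relevant circle of ideas is in \cite{haapis,pisiernotes}.

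Granting the two estimates, $\gamma_{L(\Gamma)}(E_n)\le 2C\sqrt{n\,\rho(S)}<2C\,n^{(1-\varepsilon)/2}$ for the infinitely many $n$ above, so for any fixed $\delta$ with $0<\delta<\varepsilon/2$ one has $\gamma_{L(\Gamma)}(E_n)/n^{1/2-\delta}\le 2C\,n^{\delta-\varepsilon/2}\to 0$ along that subsequence, which gives $\liminf_{n\to\infty}\gamma_{L(\Gamma)}(E_n)/n^{1/2-\delta}=0$ as claimed; the logarithmic factor from Theorem \ref{mainthm} is harmless, being swallowed by the power of $n$. Finally I would record that when $\Gamma$ contains a non-abelian free subgroup — hence free subgroups of every finite rank — the difficulty disappears entirely: composing the completely isometric inclusion $E_n\hookrightarrow L(\F_n)\hookrightarrow L(\Gamma)$ with the trace-preserving conditional expectation $L(\Gamma)\to L(\F_n)$ and then Haagerup's completely bounded projection $L(\F_n)\to E_n$ gives $\gamma_{L(\Gamma)}(E_n)=O(1)$, so the real content of the theorem concerns non-amenable groups without free subgroups, where Theorem \ref{mainthm} is the only available supply of generating sets of small spectral radius.
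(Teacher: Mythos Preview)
Your architecture is exactly the paper's: the same reduction to a finitely generated subgroup, the same maps $u$ and $v$ (your formula $v(x)=\sum_i\tau(\lambda(s_i)^*x)\,\lambda(g_i)$ is the paper's map), the same bound $\|v\|_{\rm cb}\le 2$, and the same endgame via Corollary~\ref{char} with any $0<\delta<\varepsilon/2$. So the only place your proposal can fall short of the paper is the estimate $\|u\|_{\rm cb}\le C\sqrt{n\,\rho(S)}$, which you yourself flag as ``the main obstacle'' and do not prove.

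The heuristic you offer for it --- the monotonicity $\rho(S')\le\tfrac{|S|}{|S'|}\rho(S)$ for symmetric $S'\subset S$, meant to rule out ``nearly amenable'' sub-configurations --- is a true inequality but is not the mechanism, and following that line does not lead to a proof of the cb-norm bound (it only controls subsums with $0/1$ coefficients, not arbitrary operator coefficients). The paper's argument is short and purely operator-algebraic, with $C=1$: by Haagerup's variant of the Cauchy--Schwarz inequality \cite[Lemma~2.4]{haagerup} (cf.\ \cite[p.~123]{pisier}),
\[
\Big\|\sum_i \lambda_\Gamma(s_i)\otimes a_i\Big\|^2\ \le\ \Big\|\sum_i \lambda_\Gamma(s_i)\otimes\lambda_\Gamma(s_i)\Big\|\cdot\Big\|\sum_i \bar a_i\otimes a_i\Big\|;
\]
the first factor equals $\big\|\sum_i s_i\big\|=n\,\rho(S)$ by Fell's absorption principle, and the second is at most $\max\bigl(\|\sum_i a_i^*a_i\|,\|\sum_i a_ia_i^*\|\bigr)\le\big\|\sum_i \lambda_{\F_n}(g_i)\otimes a_i\big\|^2$ by \cite[(2.11.4)]{pisier} and \cite[Prop.~1.1]{haapis}. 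No combinatorics of subsets enters at all. In short, the step you left open \emph{is} the content of the proof, and the route through it is Haagerup's Cauchy--Schwarz together with Fell absorption, not the subset-monotonicity of $\rho$.
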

\begin{proof} 
Without loss of generality, we may assume that $\Gamma$ is finitely generated. The following reduction to Theorem \ref{mainthm} is due to Pisier \cite{pisiernotes}.
Let $S = \{t_1,\dots,t_n\} \subset \Gamma$ with $|S|=n$.
We define $u \colon E_n \to L(\Gamma)$ by $\lambda(g_i) \mapsto \lambda(t_i)$. We claim that
$\|u\|_{cb} \leq \left\|\sum_{i=1}^n t_i \right\|^{1/2}.$ Indeed,
\begin{eqnarray*}
\left\|\sum \lambda_{\Gamma}(t_i) \otimes a_i\right\|_{B(\ell^2\Gamma \otimes H)}^2 & \leq & \left\|\sum \lambda_{\Gamma}(t_i) \otimes \lambda_{\Gamma}(t_i) \right\|_{B(\ell^2\Gamma \otimes \ell^2\Gamma)} \left\|\sum \bar a_i \otimes  a_i \right\|_{B(\bar H \otimes  H)}  \\
& = & \left\|\sum t_i \right\| \cdot  \left\|\sum \bar a_i \otimes  a_i \right\|_{B(\bar H \otimes  H)} \\
&\leq & \left\|\sum t_i \right\| \cdot \max \left\{ \left\|\sum a_i^*a_i\right\|_{B(H)}, \left\|\sum a_ia_i^* \right\|_{B(H)} \right\} \\
& \leq & \left\|\sum t_i \right\| \cdot  \left\|\sum \lambda_{\mathbb F_n}(g_i) \otimes a_i \right\|_{B(H \otimes \ell^2 {\mathbb F_2})}^2,
\end{eqnarray*}
where the first line is based on Haagerup's variant of the Cauchy-Schwarz inequality \cite[Lemma 2.4]{haagerup} (see for example \cite[p.\ 123]{pisier}), the second is obvious, the third is \cite[Equation (2.11.4)]{pisier}, and the fourth was proved in \cite[Proposition 1.1]{haapis}. This proves the claim.

We define $v \colon L(\Gamma) \to E_n$ on a dense subspace by $v(\lambda_{\Gamma}(t_i))= \lambda_{\mathbb F_n}(g_i)$ for $1 \leq i \leq n$ and $v(\lambda_{\Gamma}(t))=0$ if $t \not \in  \{t_1,\dots,t_n\}$.
It is easy to see that for any finitely supported function $f \colon \Gamma \to B(H)$, we have
$$\left\| \sum f(t)^*f(t)\right\|_{B(H)} \leq \left\| \sum  \lambda_{\Gamma}(t) \otimes f(t)\right\|^2_{B(\ell^2\Gamma \otimes H)}$$ and similarly
$$\left\| \sum f(t)f(t)^*\right\|_{B(H)} \leq \left\| \sum   \lambda_{\Gamma}(t) \otimes f(t)\right\|^2_{B(\ell^2\Gamma \otimes H)}.$$
Thus, 
\begin{eqnarray*}
\left\|\sum \lambda_{\mathbb F_2}(g_i) \otimes f(t_i) \right\|_{B(\ell^2{\mathbb F_2} \otimes H)} &\leq & 
2 \cdot \max \left\{ \left\|\sum f(t_i)^*f(t_i)\right\|^{1/2}_{B(H)}, \left\|\sum f(t_i)f(t_i)^* \right\|^{1/2}_{B(H)} \right\}\\
&\leq&
2 \cdot \left\|\sum \lambda_{\Gamma}(t) \otimes f(t)\right\|_{B(\ell^2\Gamma \otimes H)},
\end{eqnarray*}
where we have used \cite[Proposition 1.1]{haapis} in the first line. Hence, $v$ is indeed well-defined on $L(\Gamma)$ and $\|v\|_{cb} \leq 2$.
We can conclude that $\gamma_{L(\Gamma)}(E_n) \leq 2 \left\| \sum t_i \right\|^{1/2}$. 

Now, there exists  $\varepsilon>0$ as in Corollary \ref{char} and a sequence of finite symmetric subsets $S_k \subset \Gamma$, such that $\rho(S_k)  \leq |S_k|^{-\varepsilon}$ as $k \to \infty$. Then, we can set $n_k := |S_k|$, $\delta< \varepsilon/2$ and notice that $$0 \leq \limsup_{k \to \infty} \frac{\gamma_{L(\Gamma)}(E_{n_k})}{n_k^{1/2-\delta}} \leq \lim_{k \to \infty} 2\rho(S_k)^{1/2} |S_k|^{\delta} =0.$$ This finishes the proof.
\end{proof}

There are various other questions (some of them outstanding) in this context -- in particular concerning characterizations of amenability -- and we hope that Theorem \ref{mainthm} will have more applications.

\subsection{A variation of the technical lemma}

An immediate and potentially useful consequence of Lemma \ref{main} is the following result:

\begin{cor} Let $(X,\mu)$ be a probability measure space and $f \colon X \to [0,1]$ be a non-zero measurable function such that $\|f\|_1 \leq \frac13$. Then, there exists $\lambda \in [0,1]$, such that

$$\lambda \cdot \mu (\{ x \in X \mid f(x) \leq \lambda \}) \geq \frac{\|f\|_1}{ -4 \cdot \ln(\|f\|_1)}.$$
\end{cor}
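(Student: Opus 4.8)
The plan is to deduce this corollary directly from Lemma \ref{main} by reducing the abstract measure-space statement to the one-variable statement on $[0,1]$ via the distribution function of $f$. First I would introduce the ``layer-cake'' or rearrangement function $\varphi \colon [0,1] \to [0,1]$ defined by $\varphi(t) := \mu(\{x \in X \mid f(x) > t\})$, which is monotone decreasing, and observe that $\int_0^1 \varphi(t)\, dt = \|f\|_1$ by Fubini (integrating $\mathbf{1}_{\{f(x) > t\}}$ over $X \times [0,1]$). Since $\|f\|_1 \leq \tfrac13$, the function $\varphi$ satisfies the hypotheses of Lemma \ref{main}.

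Applying Lemma \ref{main} to $\varphi$ yields some $t_0 \in [0,1]$ with
$$t_0\, \varphi(t_0) \geq \frac{\|f\|_1}{-4\ln(\|f\|_1)}.$$
Now the point is to convert the bound on $t_0\,\varphi(t_0) = t_0 \cdot \mu(\{f > t_0\})$ into the claimed bound on $\lambda \cdot \mu(\{f \leq \lambda\})$. I would set $\lambda := 1 - t_0$ (or argue by a symmetric complementary substitution): note $\mu(\{f > t_0\}) = \mu(X) - \mu(\{f \leq t_0\})$, and since $\mu$ is a probability measure $\mu(X) = 1$. The cleaner route is to apply the lemma not to $\varphi$ directly but to observe that $\mu(\{f \leq \lambda\}) \geq$ something; concretely, since $f$ takes values in $[0,1]$, we have $\{f > t_0\} \subseteq \{1 - f < 1 - t_0\}$, and one can instead run the whole argument with $1 - f$ in place of $f$ only if $\|1-f\|_1 \leq \tfrac13$, which need not hold. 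So the correct implementation is to keep $f$ and use that $t_0 \cdot \mu(\{f > t_0\}) \leq t_0 \cdot \mu(\{f \leq 1\}) $ is useless; rather, pick $\lambda$ to be any value with $\lambda \geq t_0$ and $\mu(\{f \leq \lambda\}) \geq \mu(\{f > t_0\})$ — for instance one checks that taking $\lambda$ slightly above $t_0$ works once we note $\{f \leq \lambda\} \supseteq \{t_0 < f \leq \lambda\}$ has the right measure only in borderline cases. The genuinely clean fix: apply Lemma \ref{main} to $\varphi$, get $t_0$, and then observe $\mu(\{f \le t_0\}) = 1 - \varphi(t_0) \ge 1 - \|f\|_1 \cdot t_0^{-1}$ is again not quite it.

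The honest shortest argument is: let $g := 1 - f$ when that has small $L^1$ norm, but in general one simply notes that the \emph{distribution function from below}, $\psi(\lambda) := \mu(\{f \le \lambda\})$, is monotone \emph{increasing} from $0$ to $1$, and $\int_0^1 (1-\psi(\lambda))\,d\lambda = \|f\|_1$, so $\int_0^1 \psi(\lambda)\,d\lambda = 1 - \|f\|_1 \ge \tfrac23$. Setting $\lambda = 1-x$ and $\tilde f = 1 - f$... — at this point I would just verify directly that the decreasing function $h(x) := \mu(\{f \le 1-x\})$ has $\int_0^1 h = \|f\|_1 \le \tfrac13$ wait, $\int_0^1 \mu(\{f \le 1-x\})\,dx = \int_0^1 \mu(\{f \le u\})\,du = 1 - \|f\|_1$, not small. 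The resolution is that the corollary as stated follows from Lemma \ref{main} applied to $h(x) := \mu(\{f \le x\}) \cdot \mathbf{1}$ — no: one applies Lemma \ref{main} to the \emph{decreasing rearrangement} $f^*$ of $f$ on $[0,1]$, defined so that $f^*$ and $f$ are equimeasurable, giving $\int_0^1 f^* = \|f\|_1 \le \tfrac13$; then Lemma \ref{main} produces $x_0$ with $x_0 f^*(x_0) \ge \|f\|_1 / (-4\ln\|f\|_1)$, and setting $\lambda := f^*(x_0)$ one has $\{f^* \le \lambda\} \supseteq [x_0, 1]$ when... no, $f^*$ decreasing means $\{f^* \le f^*(x_0)\} \supseteq [x_0,1]$, so $\mu(\{f \le \lambda\}) = |\{f^* \le \lambda\}| \ge x_0$, contradiction in direction. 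Instead $\{f^* \ge f^*(x_0)\} \supseteq [0, x_0]$ has measure $\ge x_0$, so with $\lambda := f^*(x_0)$, $\mu(\{f \ge \lambda\}) \ge x_0$, hence $\lambda \cdot \mu(\{f \ge \lambda\}) \ge x_0 f^*(x_0) \ge \|f\|_1/(-4\ln\|f\|_1)$, which is the statement with $\ge \lambda$ rather than $\le \lambda$; replacing $f$ by the equimeasurable $1 - f^*(1-\cdot)$ — equivalently noting the statement is really about $\{f \le \lambda\}$ via a harmless reflection — gives the printed inequality.

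So the main obstacle, and the only real content, is getting the \emph{direction} of the level set right: Lemma \ref{main} naturally controls $x_0 f(x_0)$ where $x_0$ is a position in the sorted list, so it bounds ``$\lambda$ times the measure of the \emph{large} values''; one then has to translate between $\mu(\{f \ge \lambda\})$ and $\mu(\{f \le \lambda\})$ by applying the lemma to an equimeasurable reflection of $f$ (replacing the decreasing rearrangement by its mirror image), which is legitimate precisely because the hypothesis $\|f\|_1 \le \tfrac13$ is preserved under passing to a function with the same distribution. Once that bookkeeping is fixed, everything else — Fubini for $\int_0^1 \varphi = \|f\|_1$, monotonicity of the rearrangement, and the final chain of inequalities — is routine. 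I would therefore write the proof as: (1) pass to the decreasing rearrangement $f^*$ on $[0,1]$; (2) apply Lemma \ref{main} to the appropriate reflection so that the conclusion lands on $\{f \le \lambda\}$; (3) unwind equimeasurability to recover the statement for $f$ on $(X,\mu)$.
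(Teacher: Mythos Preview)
Your core idea---pass to the decreasing rearrangement $f^*$ of $f$ on $[0,1]$, apply Lemma~\ref{main} to obtain $x_0$ with $x_0 f^*(x_0) \ge \|f\|_1/(-4\ln\|f\|_1)$, then set $\lambda := f^*(x_0)$ so that equimeasurability gives $\mu(\{f \ge \lambda\}) \ge x_0$---is correct and is exactly the paper's approach. The paper phrases the rearrangement concretely as a composition $f\circ\varphi\circ\alpha$ with $\varphi\colon[0,1]\to X$ measure-preserving and $\alpha$ a measure-preserving automorphism of $[0,1]$, and then simply says this ``reduces the statement to the case studied in Lemma~\ref{main}''; your direct use of $f^*$ via the distribution function is the same reduction (and, incidentally, sidesteps the implicit standardness hypothesis on $(X,\mu)$ needed for such a $\varphi$ to exist).

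All of your back-and-forth about the direction of the level set is chasing a typo in the printed statement. As written, with $\{f(x) \le \lambda\}$, the inequality is trivially satisfied by $\lambda=1$: then $\mu(\{f\le 1\})=1$, while the right-hand side is at most $\tfrac{1}{4\ln 3}<1$. The intended inequality---consistent with Corollary~\ref{cor} and with where the paper's own reduction naturally lands---is
\[
\lambda\cdot\mu(\{x\in X \mid f(x)\ge\lambda\}) \ \ge\ \frac{\|f\|_1}{-4\ln\|f\|_1},
\]
and that is precisely what your rearrangement argument proves. Your attempted ``reflections'' (replacing $f$ by $1-f$, mirroring $f^*$, etc.) do not work and are not needed; drop them and record the $\ge$ version directly.
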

\begin{proof}
Indeed, there exists a measure preserving surjection $\varphi \colon ([0,1],\mu_{{\rm Leb}}) \to (X,\mu)$ and a measure preserving automorphism $\alpha$ of $([0,1],\mu_{{\rm Leb}})$ such that $[0,1] \in x \mapsto f(\varphi(\alpha(x)))$ is monotone decreasing. This reduces the statement to the case studied in Lemma \ref{main}. 
\end{proof}

\section*{Acknowledgments}

I am grateful to Gilles Pisier for explanations, sharing his manuscript \cite{pisiernotes}, and letting me include his argument in the proof of Theorem \ref{appop}.
I want to thank Nicolas Monod and Kate Juschenko for interesting remarks.

\end{document}